\newtheorem{theorem}{Theorem}[section]
\newtheorem{proposition}[theorem]{Proposition}
\newtheorem{lemma}[theorem]{Lemma}
\theoremstyle{definition}
\author{Gregory R. Chambers}
\author{Yevgeny Liokumovich}
\begin{document}

\title[Splitting a contraction of a simple curve traversed $m$ times]
{Splitting a contraction of a simple curve traversed $m$ times}

\newcommand{\R}{\mathbb R}

\maketitle

\begin{abstract}
Suppose that $M$ is a $2$-dimensional oriented Riemannian manifold, and let $\gamma$
be a simple closed curve on $M$.  Let $m \gamma$ denote the curve formed by tracing $\gamma$
$m$ times.
We prove that if $m \gamma$ is contractible through curves of length less than $L$, then
$\gamma$ is contractible through curves of length less than $L$.

In the last section we state several open questions
about controlling length and the number of
self-intersections in homotopies of curves on
Riemannian surfaces.
\end{abstract}

%%%%%%%%%%%%%%%%%%%%%%%%%%%%%%%%%%%%%%%%%%%%%%%%%%%%%
\section{Introduction}
%%%%%%%%%%%%%%%%%%%%%%%%%%%%%%%%%%%%%%%%%%%%%%%%%%%%%

The main result of this article extends Theorem 1.2 from \cite{CL}:
\begin{theorem}[\cite{CL} Theorem 1.2]
	\label{thm:original}
	Suppose $M$ is a $2$-dimensional orientable Riemannian manifold (possibly with boundary),
	$\gamma$ is a closed curve on $M$,
	and $2 \gamma$ is the closed curve formed by traversing $\gamma$ twice.  If $2 \gamma$
	can be contracted through curves of length less than $L$,
	then $\gamma$ can also be contracted through curves of length less than $L$.
	
\end{theorem}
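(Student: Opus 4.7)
Write $c_t(\theta) := H(\theta, t)$ for $\theta \in S^1 = \mathbb{R}/\mathbb{Z}$, where $H: S^1 \times [0,1] \to M$ is the given contraction: $c_0(\theta) = \gamma(2\theta \bmod 1)$, $c_1$ is constant, and $|c_t| < L$ for every $t$. The plan is to construct a contraction of $\gamma$ by forming, at each time $t$, a loop $\mu_t$ consisting of the first half of $c_t$ closed up by a short arc $\beta_t$, chosen so that $\mu_t$ falls in the free homotopy class $[\gamma]$ rather than in $[0]$ or $[2\gamma]$.

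The central length estimate is the following. For each $t$, split $c_t$ into sub-arcs $\sigma_1(t) := c_t|_{[0,1/2]}$ and $\sigma_2(t) := c_t|_{[1/2,1]}$, both connecting $c_t(0)$ to $c_t(1/2)$, with $|\sigma_1| + |\sigma_2| = |c_t| < L$. Hence $d_M(c_t(0), c_t(1/2)) \leq \min(|\sigma_1|,|\sigma_2|)$. If $\beta_t$ is any path from $c_t(1/2)$ to $c_t(0)$ of length at most $\min(|\sigma_1|,|\sigma_2|)$, for example a minimizing geodesic in $M$, then the loop $\mu_t := \sigma_1 \cdot \beta_t$ has length $|\sigma_1| + |\beta_t| \leq |\sigma_1| + \min(|\sigma_1|,|\sigma_2|) \leq |c_t| < L$. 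This bound works uniformly regardless of which half is shorter.

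At $t = 0$ and $t = 1$ the endpoints $c_t(0)$ and $c_t(1/2)$ coincide, so I take $\beta_0$ and $\beta_1$ to be constant paths; then $\mu_0$ is literally $\gamma$ as a loop, and $\mu_1$ is a constant loop at the apex of $H$. Between, any continuous choice of $\beta_t$ yields a continuous family $\{\mu_t\}$ of loops of length $< L$, all lying in the free homotopy class $[\mu_0] = [\gamma]$ (free homotopy class is locally constant on continuous families of loops). This family $\{\mu_t\}$ is the sought contraction of $\gamma$.

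The main obstacle is making the choice of $\beta_t$ genuinely continuous in $t$. Minimizing geodesics are not continuous functions of their endpoints near the cut locus of $M$; meanwhile the naive alternative of closing along $c_t$ itself gives the wrong homotopy class, since $\beta_t := \sigma_2$ yields $\mu_t = c_t$ in class $[2\gamma]$, and $\beta_t := \sigma_1$ traversed backwards yields a null-homotopic loop. I would address the continuity issue by perturbing $H$ to a generic smooth map so that the pair $(c_t(0), c_t(1/2))$ lies off the cut locus for all but finitely many $t$, at which the cut-locus crossing happens transversally, and then resolving each exceptional $t$ by interpolating between the two competing minimizers inside a small disk. The strict inequality $|c_t| < L$ is uniform in $t$ by compactness, which leaves a positive slack $\epsilon$ in which to absorb these perturbations while preserving the length bound.
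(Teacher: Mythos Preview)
The gap is in your resolution of the cut-locus crossings. You assert that the two competing minimizing geodesics can be interpolated ``inside a small disk,'' but at a generic cut point on a surface with nontrivial fundamental group the two shortest arcs from $c_t(1/2)$ to $c_t(0)$ go around an essential loop on opposite sides; the closed curve they form is not contractible, so no interpolation rel endpoints exists at all, short or otherwise. Concretely, on a flat torus $\mathbb R^2/\mathbb Z^2$ one can arrange a null-homotopic $c_t$ of length just over~$1$ with $c_t(0)$ near the origin while $c_t(1/2)$ crosses the line $x=\tfrac12$; the minimizer then jumps from the rightward segment to the leftward one, and these two concatenate to a generator of $\pi_1$. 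Since the endpoints $(c_t(0),c_t(1/2))$ move only by $O(\varepsilon)$ across the exceptional time, your family $\beta_t$ would have to realize essentially such a rel-endpoints homotopy, which is impossible. Even when the two minimizers are rel-endpoints homotopic, the intermediate arcs in any interpolation may have length on the order of the injectivity radius, which the slack in $|c_t|<L$ need not absorb. Thus the continuity step fails, and with it the whole construction: what you actually need is, for every $t$, a short arc in the \emph{correct} relative homotopy class (neither $[\sigma_1^{-1}]$ nor $[\sigma_2]$), varying continuously, and nothing in the hypotheses produces one.

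For comparison, this theorem is not proved in the present paper but quoted from \cite{CL}; the argument there, and the one used here for the $m$-fold extension, is of a completely different nature. One perturbs $H$ so that it evolves through finitely many Reidemeister moves, records at each generic time all resolutions of the self-intersections into collections of simple closed curves, and assembles these into a finite graph whose edges are the isotopies across a single move. A handshaking-lemma parity argument then locates a vertex from which one of the resolved curves contracts through short simple curves. That route never attempts to name a canonical short closing arc $\beta_t$ and so sidesteps the obstruction you encountered.
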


In this article, we extend this theorem in the following way:

\begin{theorem}
	\label{thm:main}
	Suppose $M$ is a $2$-dimensional orientable Riemannian manifold (possibly with boundary),
	$\gamma$ is a simple closed curve on $M$,
	and $m \gamma$ is the curve formed by traversing $\gamma$ $m$ times (with $m$ an integer).
	If $m \gamma$ can be contracted through curves of length less than $L$,
	then $\gamma$ can also be contracted through curves of length less than $L$.
\end{theorem}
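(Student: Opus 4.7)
I proceed by strong induction on $m$; the case $m=1$ is trivial and $m=2$ is Theorem~\ref{thm:original}.

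\emph{Even step.} For $m=2k$ with $k\geq 1$, apply Theorem~\ref{thm:original} to the closed (in general non-simple) curve $\delta := k\gamma$: the hypothesis that $2\delta = m\gamma$ contracts through curves of length less than $L$ yields, via Theorem~\ref{thm:original}, a contraction of $k\gamma$ through curves of length less than $L$. Since $\gamma$ is simple and $k < m$, the inductive hypothesis then gives a contraction of $\gamma$ through curves of length less than $L$.

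\emph{Odd step.} For odd $m\geq 3$ the halving trick is unavailable; this is the main case, and the place where simpleness of $\gamma$ enters essentially. Given a contraction $H\colon S^1\times[0,1]\to M$ of $m\gamma$, my plan is to split $H$ into $m$ contractions of $\gamma$. At $t=0$ the loop $m\gamma$ is divided at its $m$ break points (where successive copies of $\gamma$ meet) into $m$ arcs, each a copy of $\gamma$; all these break points lie in $H^{-1}(p)$, where $p = \gamma(0)$. After a generic perturbation, $H^{-1}(p)$ is a $1$-complex in $S^1 \times [0,1]$, and I look for a system of embedded, pairwise disjoint arcs inside $H^{-1}(p)$ that cut the cylinder into $m$ sub-disks, each of which $H$ sends to a null-homotopy of a loop based at $p$ that is freely homotopic to $\gamma$ (a winding-number calculation at $t=0$, using simpleness of $\gamma$, identifies the homotopy class).

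Writing $c_1(t), \ldots, c_m(t)$ for the $m$ resulting sub-loops at time $t$, the partition of $S^1 \times \{t\}$ gives $\sum_i |c_i(t)| = |H(\cdot, t)| < L$, so at each $t$ at least one $c_i(t)$ has length less than $L/m$. Assembling these partial contractions into a single contraction of $\gamma$ through curves of length less than $L$ requires choosing a (piecewise constant) index $i(t)$ and inserting short interpolating homotopies at the switching times; the individual strand lengths $< L/m$ should leave enough room within the $L$-budget. The principal obstacle is the existence of the splitting arcs themselves inside $H^{-1}(p)$: for $m=2$, \cite{CL} produces a single such arc by a min-max/surgery procedure, but for $m\geq 3$ one must control the topology of $H^{-1}(p)$ far more delicately, realising the correct combinatorial pattern of arcs while ensuring that every cut-out sub-disk is genuinely a contraction of $\gamma$.
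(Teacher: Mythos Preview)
Your even-case reduction is correct: Theorem~\ref{thm:original} applies to arbitrary closed curves, so for $m=2k$ you may apply it to $\delta=k\gamma$ and then invoke the inductive hypothesis on $k<m$. This does not lower the difficulty, however, since the odd step must still be carried out for every odd $m\geq3$, and there your proposal is not a proof but a programme. You say so yourself when you call the existence of the splitting arcs ``the principal obstacle''; nothing in the outline addresses it. Even granting such arcs, there is no reason for components of $H^{-1}(p)$ to be monotone in $t$, so a ``strip'' may meet $S^1\times\{t\}$ in several pieces (or none) and your loops $c_i(t)$ are not well-defined as single closed curves. (If the arcs \emph{were} monotone, each $c_i(t)$ would already have length $<L$ as a sub-arc of $H(\cdot,t)$, so the $L/m$ bound and the switching/interpolation device would be unnecessary---and the interpolation is in any case dangerous, since distinct $c_i(t)$ need not be close to one another.) Finally, the description of the $m=2$ argument in \cite{CL} as producing a splitting arc in $H^{-1}(p)$ does not match that paper, which already uses the resolution-graph method sketched below.

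The paper's argument is entirely different and handles all $m$ at once, with no induction and no preimage analysis. After perturbing $m\gamma$ to have exactly $m-1$ transverse self-intersections and putting the contraction in Reidemeister-generic position, one forms a graph $\Gamma$ whose vertices are the $2^j$ ways of resolving the $j$ self-intersections of each generic slice into collections of simple closed curves, with edges linking resolutions related by an isotopy across a single Reidemeister move. Euler's handshaking lemma, applied to the component of $\Gamma$ containing the initial ``$m$ disjoint copies of $\gamma$'' vertex $v^*$, produces a second odd-degree vertex; a short case analysis shows that this vertex carries a simple closed curve which is both contractible and isotopic to $\gamma$ through curves of length $<L$. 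Simplicity of $\gamma$ enters only to guarantee that $v^*$ is the \emph{unique} resolution of $H(0)$ into $m$ components, which eliminates one case in that analysis.
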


This new theorem extends the original theorem in that it considers $m$-iterates of $\gamma$ for arbitrary $m$,
however, it only pertains to \emph{simple} closed curves. Given a contraction
of $m \gamma$, we give an explicit algorithm for constructing a 
homotopy of $\gamma$. Moreover, the contraction that we obtain
is in fact an isotopy (except for the final constant curve).

The general structure of proof builds on the ideas in \cite{CL}. As in that article, we prove the theorem
by constructing a graph, and then examining the degree of that graph.  In particular, we look at various ways
of cutting each curve in our homotopy at self-intersection points and rejoining them to form collections of simple
closed curves.  These form the vertices of our graph.  We join a pair of vertices by an edge if
there are isotopies between the collections of simple closed curves represented by the vertices.

We then argue that we can start at a vertex which corresponds to $m$ copies of $\gamma$ and find a path
in this graph to a point where the
number of simple closed curves decreases.  This implies that we have a contraction of one of the original
curves which is isotopic to $\gamma$.  This portion of the argument, as in \cite{CL}, relies on
Euler's handshaking lemma 
from his famous solution of the K\"{o}nigsberg's bridges problem.

This result can be viewed as an effective version of the fact that closed orientable $2$-dimensional manifolds
have no torsion; we cannot find a simple closed curve which is more difficult to contract than the curve formed
by traversing it $m$ times.

In the last section we present some related open problems.

\vspace{2mm}

\textbf{Acknowledgments.}
Both authors would like to express their gratitude to the Government of
Ontario for its support in the form of Ontario Graduate Scholarships.

%%%%%%%%%%%%%%%%%%%%%%%%%%%%%%%%%%%%%%%%%%%%%%%%%%
\section{Preliminaries}
%%%%%%%%%%%%%%%%%%%%%%%%%%%%%%%%%%%%%%%%%%%%%%%%%%

We begin by perturbing the given contraction of $m \gamma$ so that the new contraction still
consists of curves of length less than $L$, but that the initial curve has $m - 1$ self-intersections, as in
Figure \ref*{fig:perturbation}.  Furthermore, we can perform this perturbation so that, if we cut this
curve at every self-intersection, we obtain $m$ simple curves, each of which is isotopic of $\gamma$
through curves of length less than $L$ (see Figure \ref*{fig:first_configuration}).
We may assume that $H$ has already been perturbed in this way.

\begin{figure}
	\centering   
	\includegraphics[scale=0.3]{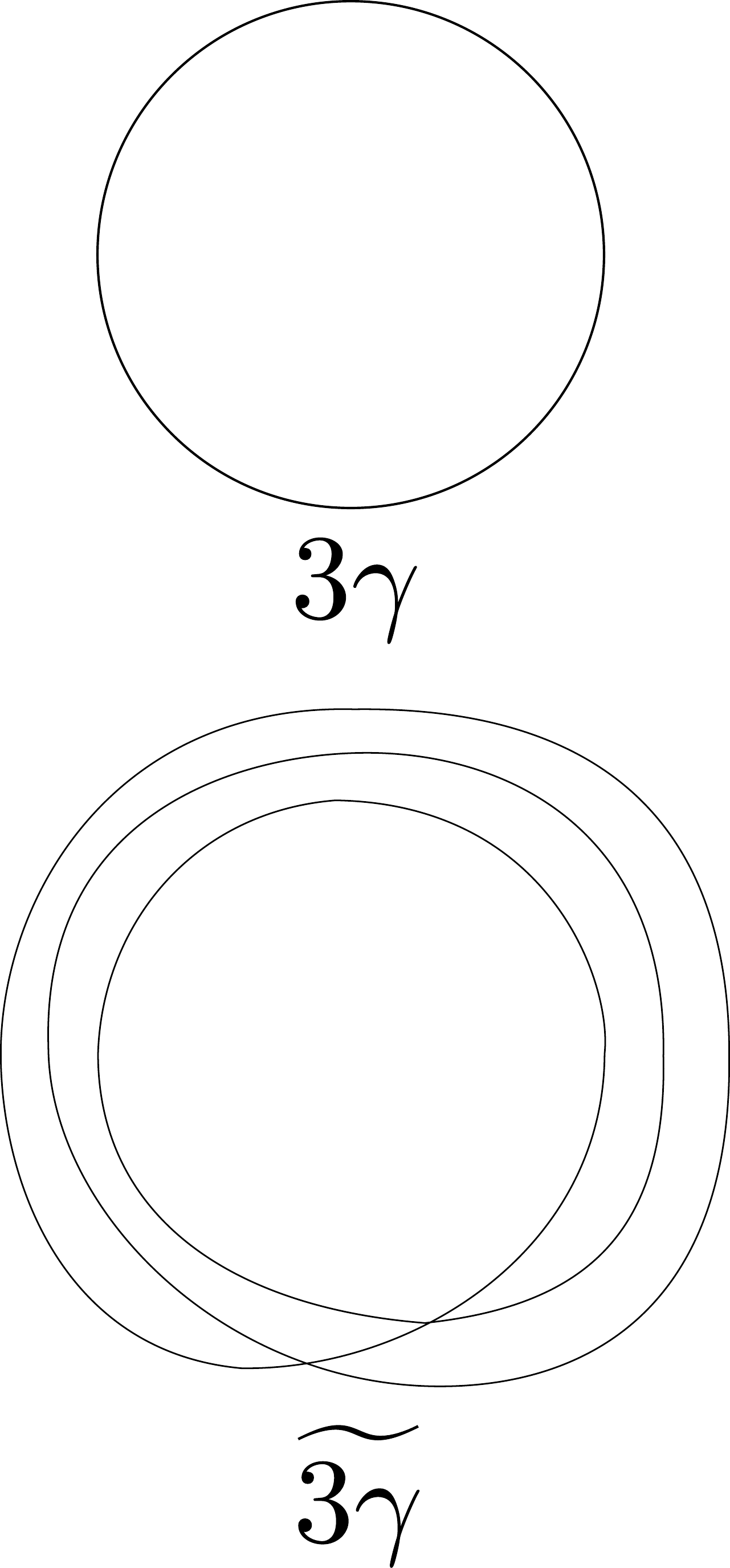}
	\caption{Perturbation of $3 \gamma$ to form $\widetilde{3 \gamma}$} \label{fig:perturbation}
\end{figure}

As in \cite{CL}, we use a parametric form of Thom's Multijet Transversality Theorem as well as
other standard methods (see \cite{A}, \cite{D}, \cite{Bru} and \cite{GG})
to perturb our homotopy $H$ to obtain a homotopy $\tilde{H}$ such that
\begin{enumerate}
		\item	$\tilde{H}(0) = H(0)$
		\item	$\tilde{H}(1)$ is a closed curve which lies in a small disc possessing the
			property that any simple closed curve in this disc of length less than $L$ can
			be contracted in that disc through curves of length less than $L$.
		\item	The length of $\tilde{H}(t)$ is less than $L$ for every $t$.
		\item	For all but finitely many
		$t \in [0,1]$, $H(t)$ is an immersed curve 
		with finitely many transverse self-intersections and
		no triple points.
		\item	There exists a finite number of times $t_0 < \dots < t_n \in [0,1]$
			with $t_0 = 0$ and $t_n = 1$ such that exactly one Reidemeister
			move occurs between $t_i$
			and $t_{i+1}$ for every $1 \leq i < n$. For every point $t_i$,
			$t_i$ satisfies $(4)$.
			%Furthermore, no other Reidemeister moves occur.
\end{enumerate}

The Reidemeister moves that we refer to are so named because of their obvious
analogy to the corresponding moves in knot theory (see \cite{K}).  There are three types of them,
and they are shown in Figure \ref*{fig:Reidemeister_moves}.  Note that since $H(0)$
contains self-intersections and $H(1)$ is a simple closed curve, there must be at least
one Reidemeister move; as such, $n \geq 1$.  For the remainder of this article, we will
assume that our homotopy has already been put into this form.

\begin{figure}
   \centering   
    \includegraphics[scale=0.5]{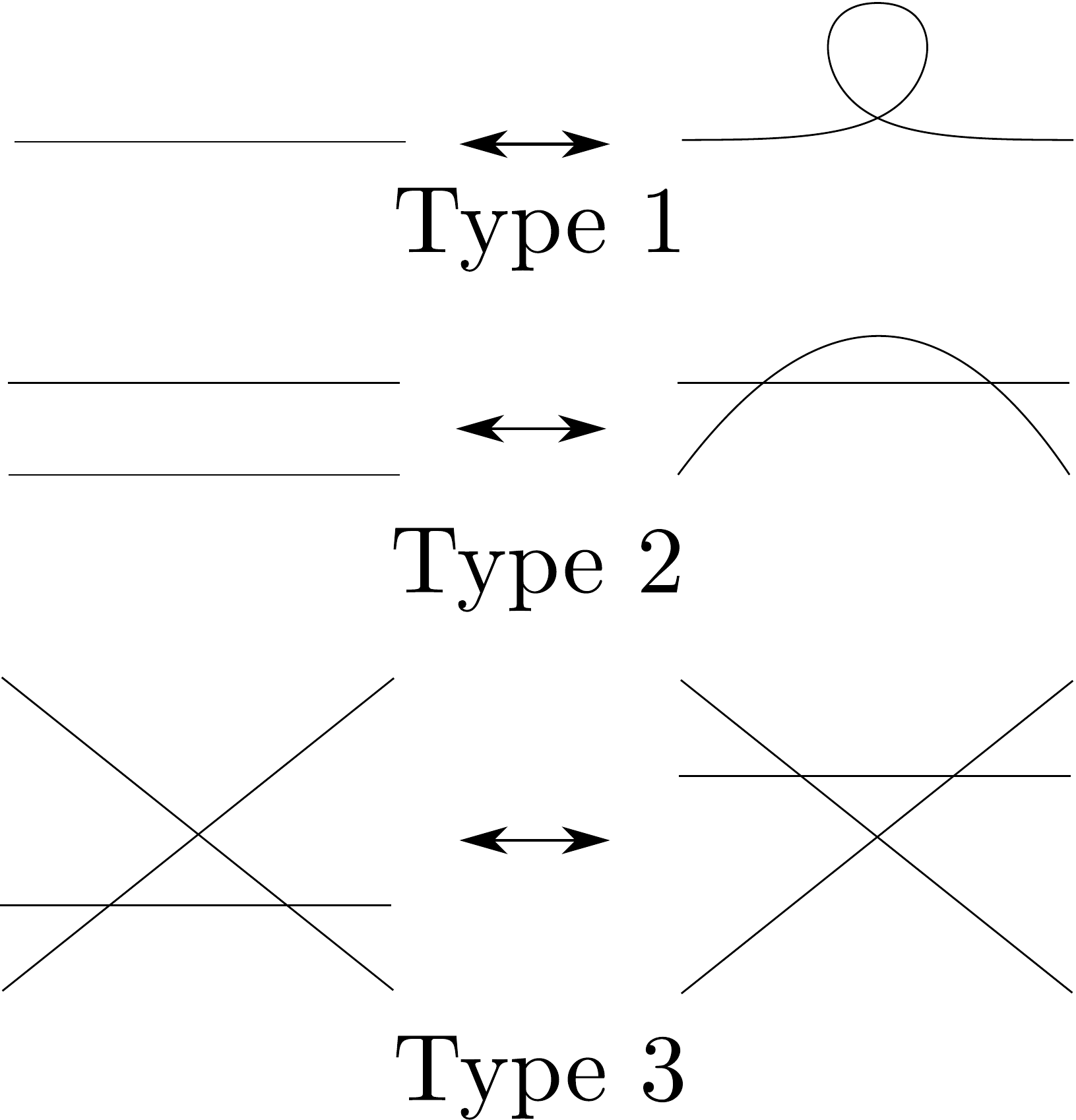}
    \caption{The 3 types of Reidemeister moves} \label{fig:Reidemeister_moves}
\end{figure}

%%%%%%%%%%%%%%%%%%%%%%%%%%%%%%%%%%%%%%%%%%%%%%%%%%%%
\section{Definition of the graph $\Gamma$}
%%%%%%%%%%%%%%%%%%%%%%%%%%%%%%%%%%%%%%%%%%%%%%%%%%%%

For each $t_i$, consider $H(t_i)$. Let the self-intersections
of $H(t_i)$ be $s_1, \dots, s_{j}$. For each $s_i$, we can find a
small open ball $B$ centered at $s_i$ so that $H(t_i) \cap B$
consists of two arcs that intersect transversally.  If we choose
an open ball $B'$ centered at $s_i$ which is much smaller than $B$,
then $H(t_i) \cap B \setminus B'$ consists of four open arcs which
have no self-intersections.  There are exactly two ways of connecting
these four arcs inside of $B'$ to obtain two arcs which do not self-intersect.  These
are shown in Figure \ref*{fig:resolution_self_intersection}.
Furthermore, if we choose $B'$ to be small enough, then we can perform either
of these surgeries so that the length of the resulting collection of arcs
does not exceed $L$.  
As a result of these surgeries we will obtain a collection 
of at most $j+1$ closed curves.
%Note that the resulting collection of arcs may not be 
%a closed curve, and it may not be a collection of closed curves.

\begin{figure}
	\centering   
	\includegraphics[scale=0.25]{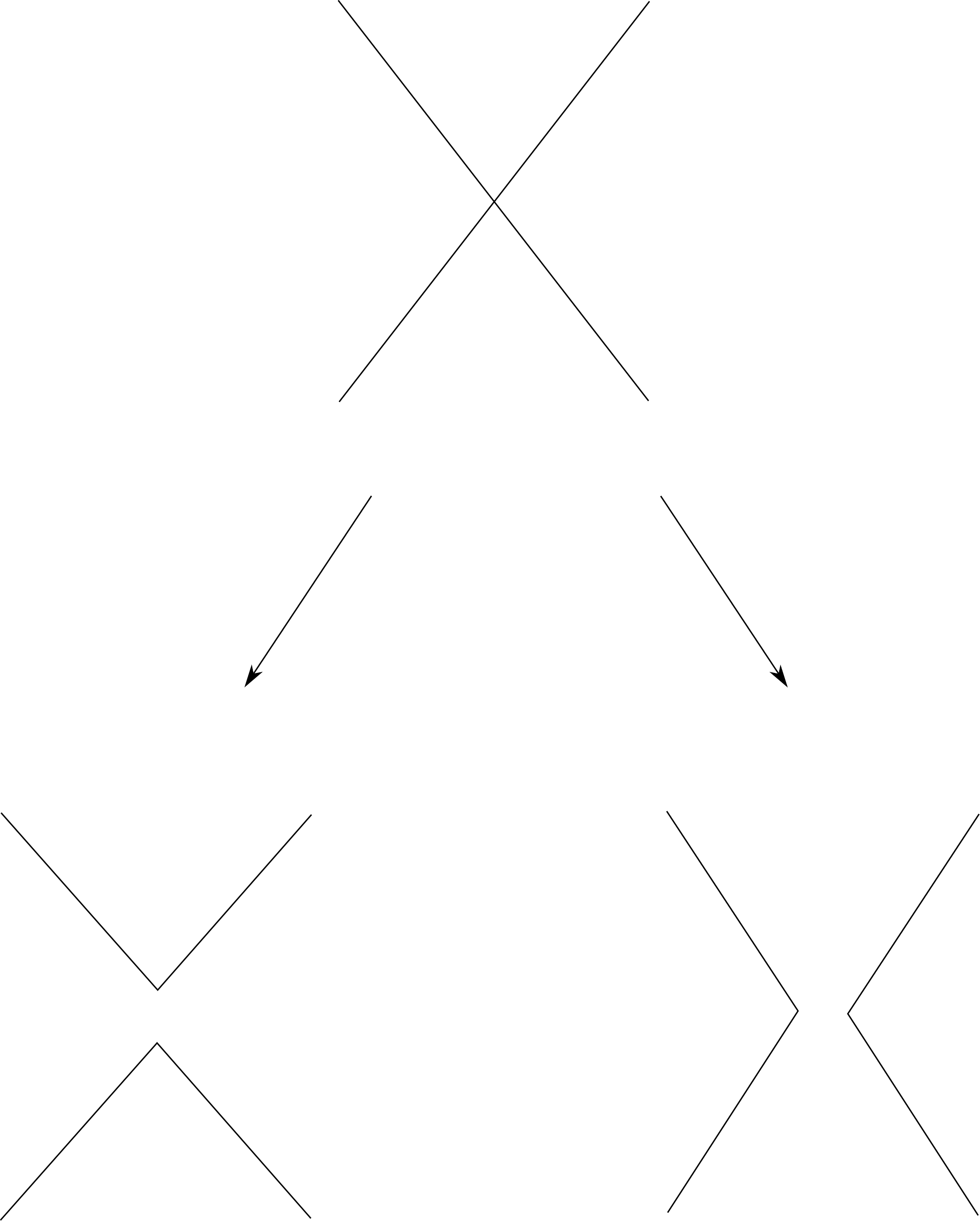}
	\caption{Possible resolutions of a self-intersection}
	\label{fig:resolution_self_intersection}
\end{figure}

As in \cite{CL}, we will define a graph $\Gamma$ whose properties
will allow us to prove the theorem.

\noindent {\bf{Vertices}} We begin by defining the vertices.
We will define $n + 1$ sets of vertices; their disjoint union will form the 
set of vertices of $\Gamma$.  These sets will be denoted as $V_0, \dots, V_n$.
For every $i$ with $0 \leq i \leq n$, consider all
collections of arcs formed by cutting every self-intersection of $H(t_i)$
in each of the two possible ways shown in Figure \ref*{fig:resolution_self_intersection}.
If there are $j$ self-intersections, then there are $2^j$ such collections.
For each collection of simple closed curves obtained from a cutting, we add a vertex to $V_i$. As previously stated, the
vertices of $\Gamma$ are $\sqcup_{i=0}^n V_i$.

\noindent {\bf{Edges}} We now proceed to adding edges.  Every edge in $\Gamma$ is either between a vertex
$v$ in $V_i$ and a vertex $w$ in $V_{i+1}$, or two vertices $v$ and $w$ in $V_i$.
Fix a vertex $v$ in $V_i$, and let $R_1$ be the Reidemeister move between $t_{i-1}$
and $t_i$ (if $i = 0$, then we skip this step).
We now add an edge from $v$ to a vertex $w$ based on the type of move which $R_1$ corresponds to.
Let $C_v$ and $C_w$ be the collections of simple closed curves which $v$ and $w$ respectively
represent.

If $R_1$ is of Type 1, and if $C_1$ and $C_2$ locally correspond to diagrams
which are linked in Figure~\ref*{fig:type_1_resolution}, then we add
an edge between $v$ and $2$.
		
\begin{figure}
	\centering   
	\includegraphics[scale=0.5]{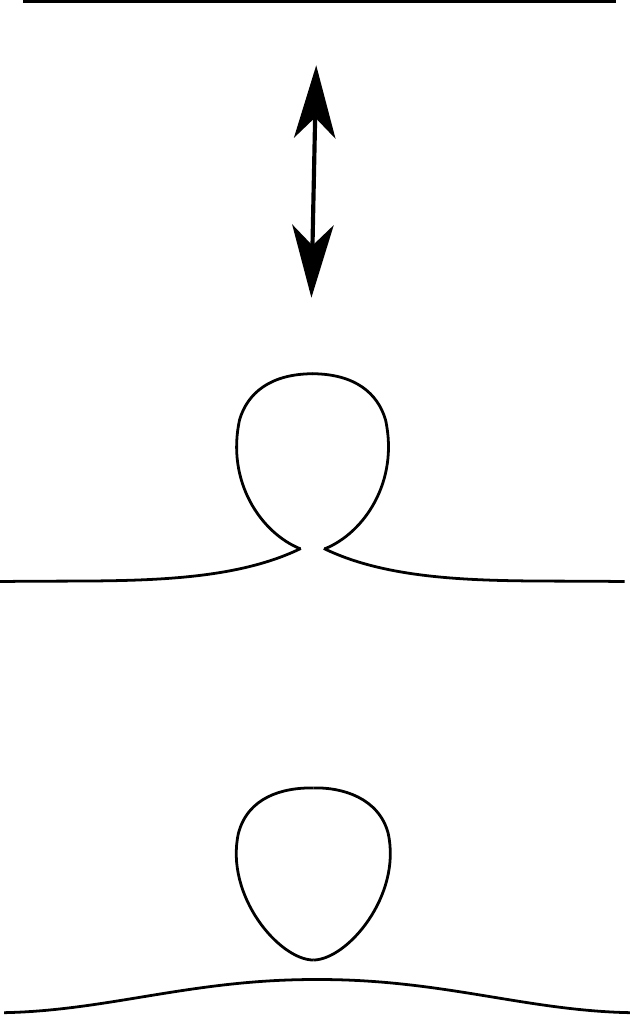}
	\caption{Possible edges resulting from a Type 1 move}
	\label{fig:type_1_resolution}
\end{figure}

If $R_1$ is of Type 2, and if $C_1$ and $C_2$ locally correspond to diagrams
which are linked in Figure~\ref*{fig:type_2_resolution}, then we
add an edge between $v$ and $w$.

\begin{figure}
	\centering   
	\includegraphics[scale=0.5]{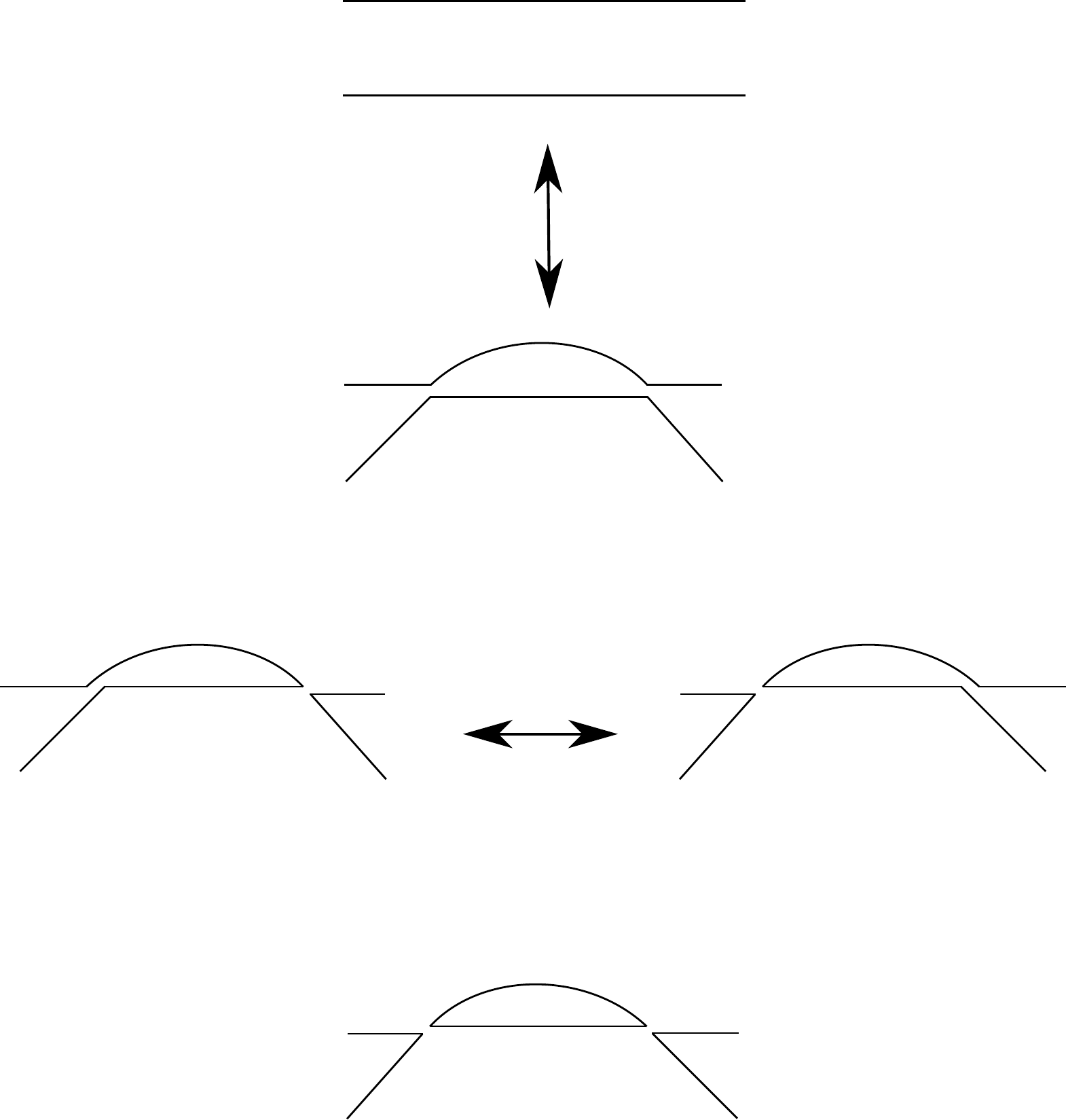}
	\caption{Possible edges resulting from a Type 2 move}
	\label{fig:type_2_resolution}
\end{figure}
			
If $R_1$ is of Type 3, and if $C_1$ and $C_2$ locally correspond to diagrams
which are linked in Figure~\ref*{fig:type_3_resolution}, then we
add an edge between $v$ and $w$.

\begin{figure}
	\centering   
	\includegraphics[scale=0.5]{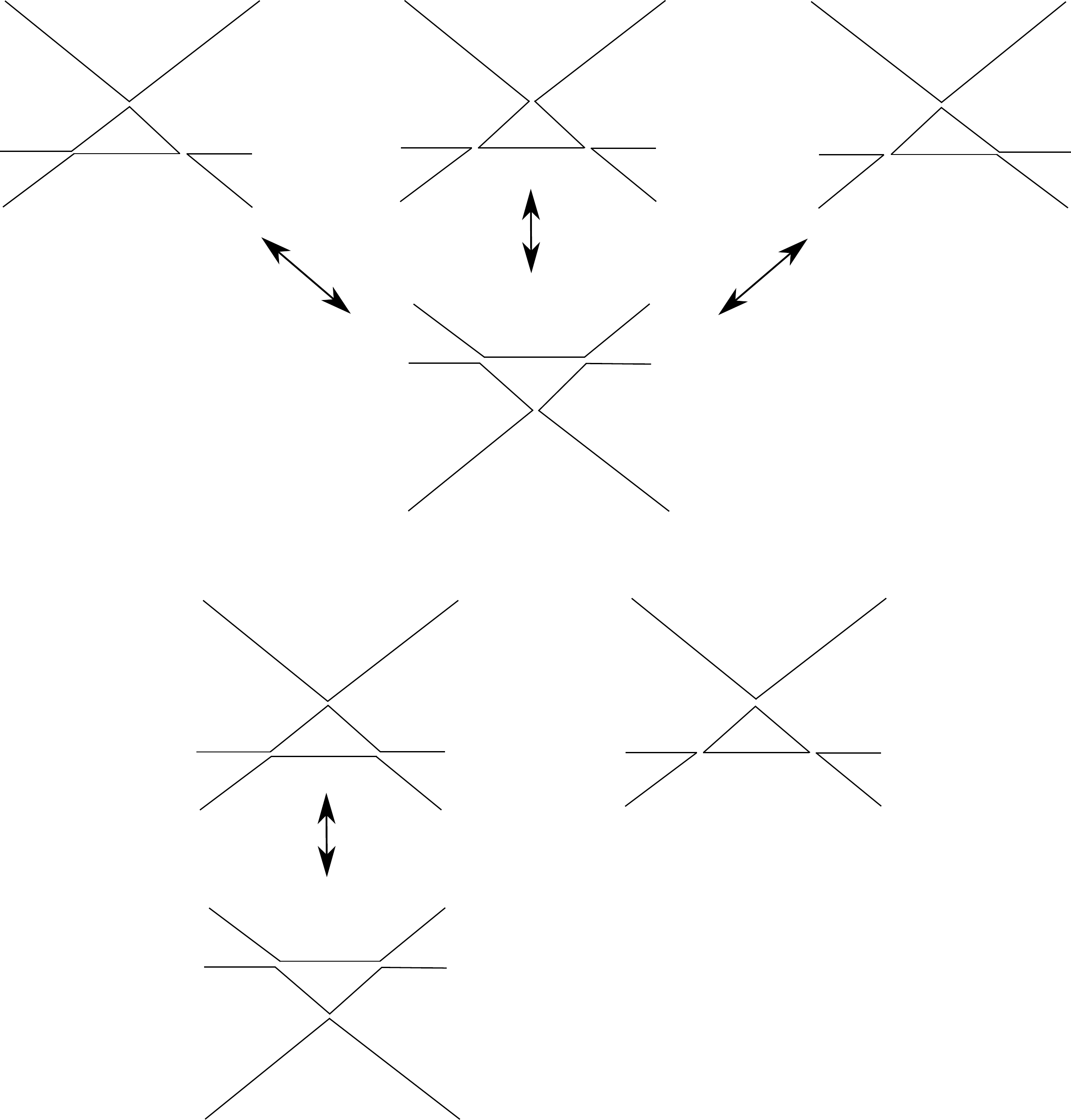}
	\caption{Possible edges resulting from a Type 3 move} \label{fig:type_3_resolution}
\end{figure}

Let $R_2$ be the Reidemeister move between
$t_i$ and $t_{i+1}$ (if $i = n$,
then we skip this step).  We now follow the identical procedure
as we followed with $R_1$ to add edges to $\Gamma$.  Note that if we add
an edge between $v$ and a vertex $w$ in the first step, and we add an
edge between $v$ and $w$ in this second step, then $\Gamma$ has two
edges between $v$ and $w$.

%%%%%%%%%%%%%%%%%%%%%%%%%%%%%%%%%%%%%%%%%%%%%%%%%%%%%%%%%%%%%%%%%%%%
\section{Proof of Theorem~\ref*{thm:main}}
%%%%%%%%%%%%%%%%%%%%%%%%%%%%%%%%%%%%%%%%%%%%%%%%%%%%%%%%%%%%%%%%%%%%

\begin{lemma}
	\label{lem:edge_isotopy}
	If vertices $v$ and $w$ in $\Gamma$ that correspond to collections of curves $C_v$
	and $C_w$ are connected by an edge, then there is
	a bijection between the curves in $C_v$ and the curves in $C_w$
	such that there is an isotopy between each pair of curves that
	are obtained from this bijection.
\end{lemma}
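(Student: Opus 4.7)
The plan is to exploit the locality of Reidemeister moves: every edge of $\Gamma$ corresponds to a move $R$ supported in a small disc $B \subset M$, so all of the difference between the configurations represented by $v$ and $w$ occurs inside $B$. Outside $B$, the immersed curves underlying $v$ and $w$ coincide pointwise and have the same self-intersections, and by the rules used to draw the edge (Figures~\ref*{fig:type_1_resolution}, \ref*{fig:type_2_resolution}, and \ref*{fig:type_3_resolution}), the vertices $v$ and $w$ must also agree on the resolutions chosen at each of these exterior self-intersections. Consequently $C_v$ and $C_w$ consist of a common system of arcs in $M \setminus B$ glued to two possibly different local arc systems inside $\overline{B}$, each having the same collection of endpoints on $\partial B$.

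This reduces the lemma to a purely local claim: for each linked pair of diagrams in the three figures, the two local arc systems inside $\overline{B}$ are ambient isotopic rel $\partial B$, through families of pairwise disjoint embedded arcs. Granting this local claim, the bijection between $C_v$ and $C_w$ is built as follows. Each closed curve in $C_v$ is a cyclic concatenation of exterior arcs (in $M \setminus B$) with interior arcs (in $\overline{B}$). The local isotopy produces a bijection between the interior arcs of $C_v$ and those of $C_w$ that preserves the pairs of endpoints on $\partial B$, so replacing each interior arc of $C_v$ by the matched interior arc of $C_w$ reassembles the same exterior arcs into precisely the same number of closed curves in $C_w$. The ambient isotopy between each matched pair of closed curves is then obtained by extending the local isotopy by the identity outside $B$.

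The remaining step is a direct case analysis of Figures~\ref*{fig:type_1_resolution}--\ref*{fig:type_3_resolution}. For a Type~1 move the local picture is a strand versus a strand with a small kink, and the two linked resolutions differ only by absorbing this kink, which is clearly realizable rel $\partial B$. For a Type~2 move, one side of $B$ is two disjoint strands while the other has the same two strands crossing twice; the four resolutions of these two crossings yield only two distinct endpoint pairings on $\partial B$, and Figure~\ref*{fig:type_2_resolution} links each of them to the unique configuration on the other side having the same endpoint pairing, making the rel-$\partial B$ isotopy immediate. For a Type~3 move, both sides of $B$ carry three transverse crossings, and the eight resolutions on each side are in bijection through the induced arc pattern on $\overline{B}$; Figure~\ref*{fig:type_3_resolution} records exactly this bijection, and in each case the two planar arc systems are isotopic rel $\partial B$.

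I expect the main obstacle to lie in the Type~3 case. There one must verify that the matching of resolutions drawn in Figure~\ref*{fig:type_3_resolution} really does coincide with the bijection induced by the endpoint patterns on $\partial B$, and that the two planar arc systems obtained from a linked pair are genuinely ambient isotopic rel $\partial B$, rather than merely combinatorially equivalent. Once this is checked, Types~1, 2, and 3 are all handled uniformly, and the ambient isotopy produced by extending the local isotopy by the identity outside $B$ yields the claimed pairwise isotopies, completing the proof.
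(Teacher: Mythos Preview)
Your approach is correct and is essentially the same as the paper's: a case-by-case analysis over the three Reidemeister move types, using the locality of each move to reduce to checking that the linked local diagrams in Figures~\ref*{fig:type_1_resolution}--\ref*{fig:type_3_resolution} are isotopic rel $\partial B$. The paper's own proof is much terser---it simply points to the figures and refers back to~\cite{CL}---whereas you have spelled out the locality reduction and the gluing of exterior and interior arcs more explicitly; one small caveat is that your description of the Type~3 case as an eight-to-eight ``bijection'' of resolutions slightly overstates the edge structure (as Lemma~\ref*{lem:degree} shows, a single move can contribute $0$, $1$, or $3$ edges to a vertex), but since the lemma only concerns pairs that \emph{are} joined by an edge this does not affect your argument.
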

\begin{proof}
	This is proved on a case-by-case basis.  Each edge $e$ is added
	as a result of a Reidemeister move $R$.  In every case,
	we observe that the number of simple closed curves does not change.
	The fact that the curves are isotopic in each case is clear from Figures
	\ref*{fig:type_1_resolution}, \ref*{fig:type_2_resolution} and
	\ref*{fig:type_3_resolution} (see the proof of 
	Theorem 1.2 in~\cite{CL}).
\end{proof}

\begin{lemma}
	\label{lem:degree}
	For every vertex $v$ in $\Gamma$, if $v$ has odd degree, then at least one of the following
	properties is true:
	\begin{enumerate}
		\item	$v$ lies in $V_0$.
		\item	$v$ lies in $V_n$.
		\item	There is a simple closed curve in the collection of curves which corresponds
			to $v$ that is contractible through simple closed curves of length less than $L$.
	\end{enumerate}
\end{lemma}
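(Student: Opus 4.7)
The plan is to run a case analysis on the position of $v$, reducing the lemma to a local parity computation at each Reidemeister move adjacent to $t_i$.

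First, if $v \in V_0$ (respectively $v \in V_n$), condition (1) (respectively (2)) holds immediately, so I may assume throughout that $v \in V_i$ with $0 < i < n$. Let $R_1$ denote the Reidemeister move occurring between $t_{i-1}$ and $t_i$, and let $R_2$ denote the one between $t_i$ and $t_{i+1}$. By the construction of $\Gamma$, every edge incident to $v$ is generated either by $R_1$ (giving an edge from $v$ to a vertex in $V_{i-1}$) or by $R_2$ (giving an edge from $v$ to a vertex in $V_{i+1}$), so
$$\deg(v) \;=\; \deg_{R_1}(v) + \deg_{R_2}(v),$$
where $\deg_R(v)$ counts the edges at $v$ that $R$ contributes to $\Gamma$.

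The heart of the argument is to show that for each $R \in \{R_1,R_2\}$, the contribution $\deg_R(v)$ is even \emph{unless} the local resolution of $v$ inside the small disk $D_R$ where $R$ takes place creates a simple closed curve bounding a disk contained in $D_R$. In that latter situation this small bounding circle can be contracted entirely within $D_R$ through simple closed curves of arbitrarily short length, so condition (3) applies at once. To verify the evenness claim I would enumerate the local resolutions of $v$ inside $D_R$ (two in the Type 1 case of Figure \ref*{fig:type_1_resolution}, four in the Type 2 case of Figure \ref*{fig:type_2_resolution}, and eight in the Type 3 case of Figure \ref*{fig:type_3_resolution}) and, for each pattern, read off the matching local resolutions on the opposite side of $R$ directly from the appropriate figure. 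The key observation is that the local resolutions which do \emph{not} produce an isolated contractible circle are paired off by the matching prescribed in the figure, which is exactly what makes the edge count from those patterns even.

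Putting these pieces together, if condition (3) fails for $v$ at both $D_{R_1}$ and $D_{R_2}$, then both $\deg_{R_1}(v)$ and $\deg_{R_2}(v)$ are even, whence $\deg(v)$ is even; contrapositively, $\deg(v)$ odd forces one of conditions (1), (2), (3). The main obstacle in the strategy is the careful bookkeeping for Type 3, where there are eight local resolutions on each side; this, however, is directly analogous to the Type 3 bookkeeping in the proof of Theorem \ref*{thm:original} in \cite{CL}, so I would import that pairing of resolutions and apply it locally. The Type 1 and Type 2 enumerations involve only a handful of configurations and can be checked by inspection of Figures \ref*{fig:type_1_resolution} and \ref*{fig:type_2_resolution}, with the configurations that trigger condition (3) being precisely those in which the resolution produces a small bigon or a small isolated loop inside $D_R$.
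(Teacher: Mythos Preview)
Your overall structure mirrors the paper's proof exactly: reduce to $0<i<n$, write $\deg(v)=\deg_{R_1}(v)+\deg_{R_2}(v)$, and analyze each summand by inspecting the local picture in the Reidemeister disk. However, your central parity claim is inverted. You assert that $\deg_R(v)$ is \emph{even} whenever the local resolution of $v$ inside $D_R$ produces no isolated circle, and that the non-circle resolutions are ``paired off by the matching prescribed in the figure.'' A direct inspection of Figures~\ref*{fig:type_1_resolution}, \ref*{fig:type_2_resolution}, and \ref*{fig:type_3_resolution} shows the opposite: for each Reidemeister type, a fixed local resolution of $v$ that contains no small circle is linked to exactly one or exactly three local resolutions on the other side of $R$, so $\deg_R(v)\in\{1,3\}$ is \emph{odd}; the small-circle configurations of Figure~\ref*{fig:zero_edges} are precisely the ones for which $\deg_R(v)=0$. (Already for Type~1 this is visible: on the side carrying the loop, the unique non-circle resolution is a single arc, and it is linked to exactly one configuration on the loopless side, giving one edge, not an even number.)

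Thus the paper concludes $\deg(v)\in\{2,4,6\}$ via \emph{odd $+$ odd}, not even $+$ even. Your final contrapositive conclusion happens to survive because odd $+$ odd and even $+$ even are both even, but the enumeration you propose to carry out would contradict, not confirm, the intermediate claim you have written down. The fix is simply to swap the parity: show that the absence of a small contractible circle forces each $\deg_{R_j}(v)$ to be odd.
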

\begin{proof}
	Assume that there is a vertex $v$ that has none of these properties, but which has odd
	degree.  We have that $v \in V_i$ with $i \neq 0$ and $i \neq n$.  Let $R_1$ be the Reidemeister move
	between $t_{i-1}$ and $t_i$, and let $R_2$ be the Reidemeister move between
	$t_i$ and $t_{i+1}$. The edges with endpoint $v$ are produced from $R_1$ and from $R_2$.
	Looking at Figures~\ref*{fig:type_1_resolution}, \ref*{fig:type_2_resolution}, and
	\ref*{fig:type_3_resolution}, we see that in each case $R_1$ produces either
	$0$, $1$, or $3$ edges.  If it produces $0$ edges, then $v$ must locally look like
	one of the diagrams in Figure~\ref*{fig:zero_edges} depending on the type of $R_1$.
	In each case, the simple closed curve shown in each diagram can be contracted to a constant
	curve within a very small disc; as such, it can be contracted through curves of length
	less than $L$.  Since $v$ does not have this property, $R_1$ must add $1$ or $3$ edges to $v$.

	\begin{figure}
		\centering   
		\includegraphics[scale=0.5]{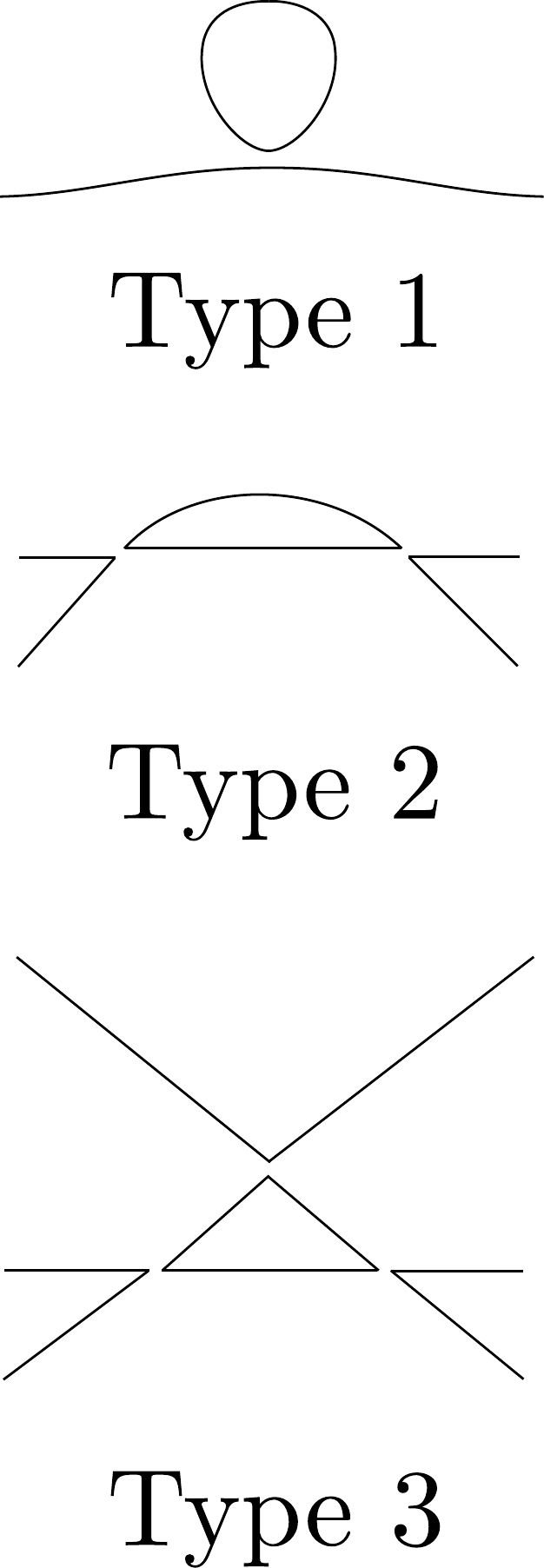}
		\caption{The possible diagrams that correspond to vertices with $0$ edges}
		\label{fig:zero_edges}
	\end{figure}

	The identical analysis for $R_2$ demonstrates that it too contributes $1$ or $3$
	edges to $v$.  As such, the number of edges with endpoint $v$ is $2$, $4$, or
	$6$, proving that it has even degree.

\end{proof}

We will look at a certain subgraph $\Gamma'$ of $\Gamma$.  This subgraph is defined
as the connected component of $\Gamma$ which contains a particular vertex $v^* \in V_0$.
$H(0)$ has $m-1$ self-intersections; if we make the correct choice for resolving each
vertex, then we obtain $m$ simple closed curves, each of which is isotopic to $\gamma$
through curves of length less than $L$. $v^*$ is the vertex in $\Gamma$ that corresponds to
this collection of simple closed curves.  See Figure~\ref*{fig:first_configuration}
for an example of such a collection of curves for $m = 3$.

\begin{figure}
	\centering   
	\includegraphics[scale=0.3]{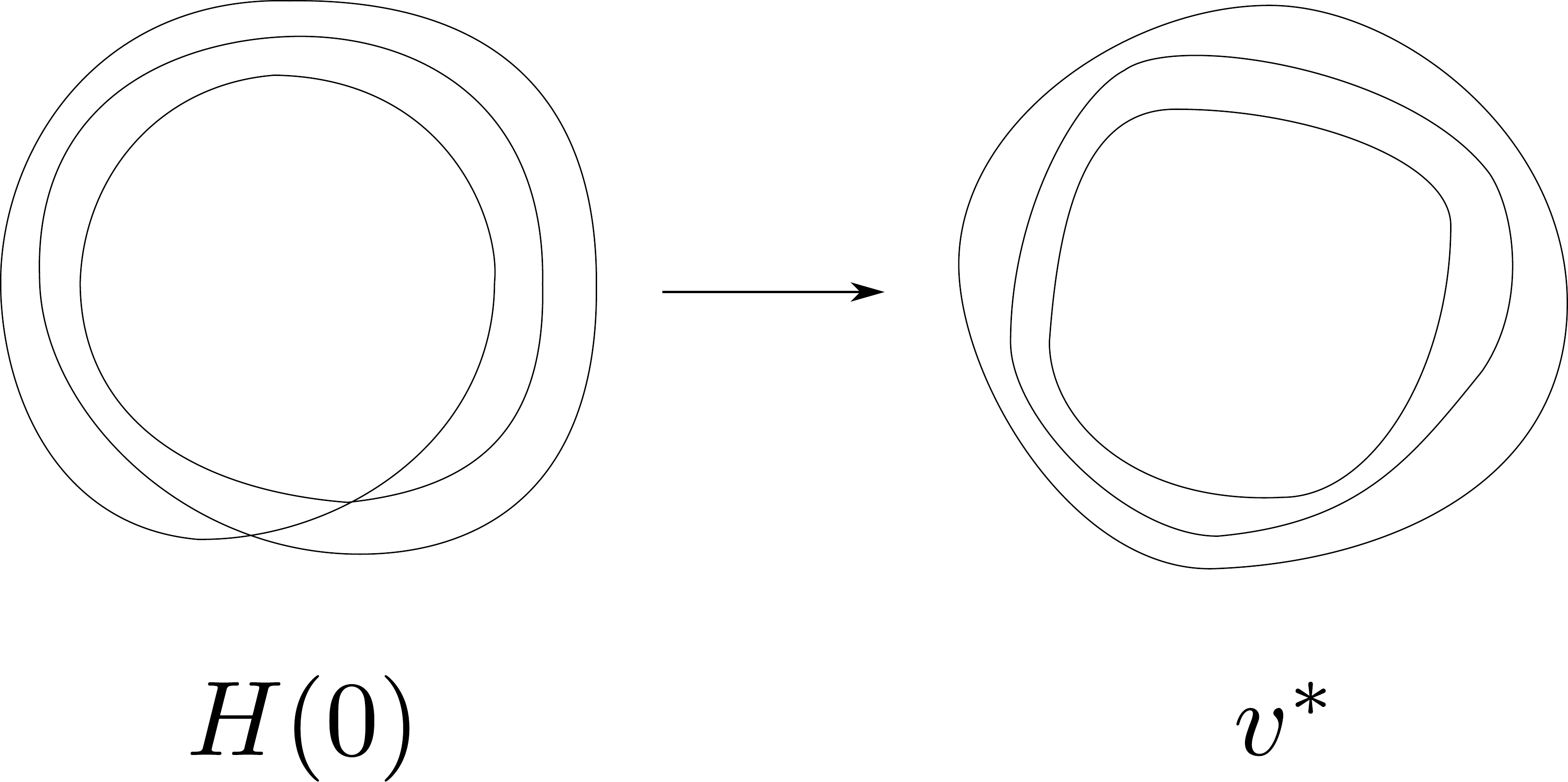}
	\caption{The redrawing which corresponds to the first vertex in our path}
	\label{fig:first_configuration}
\end{figure}

\begin{proposition}
	\label{prop:path}
	For any vertex $w$ in $\Gamma'$, for any simple closed curve $\alpha$ in
	the collection of curves which corresponds to $w$, $\alpha$ is
	isotopic to $\gamma$ through curves of length less than $L$.
\end{proposition}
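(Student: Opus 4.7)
The plan is to argue by induction on the graph-theoretic distance $d_{\Gamma'}(v^*,w)$ in the connected component $\Gamma'$. For the base case $w = v^*$, the statement is immediate from the construction of $v^*$: the perturbation described in Section 2 was arranged so that cutting $H(0) = \widetilde{m\gamma}$ at its $m-1$ self-intersections produces $m$ simple closed curves, each of which is isotopic to $\gamma$ through curves of length less than $L$.

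For the inductive step, suppose the conclusion holds for some $w' \in \Gamma'$ and let $w$ be a neighbour of $w'$ in $\Gamma'$, joined by an edge $e$. By Lemma~\ref{lem:edge_isotopy}, there is a bijection $\phi \colon C_{w'} \to C_w$ under which corresponding curves are isotopic. The key point I would establish is that each such isotopy can be realised through simple closed curves of length strictly less than $L$. The edge $e$ is produced by a single Reidemeister move $R$ taking place in a small neighbourhood $U \subset M$ over a short time interval, and throughout this interval $|H(t)| < L$. Outside $U$ the two resolutions corresponding to $w'$ and $w$ agree, while inside $U$ the local change of resolutions is exactly the one pictured in Figures \ref{fig:type_1_resolution}, \ref{fig:type_2_resolution}, or \ref{fig:type_3_resolution}, according to the type of $R$. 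By choosing each surgery ball $B'$ small enough, the resolved simple closed curves differ in length from $H(t)$ by an arbitrarily small amount, hence stay below $L$ uniformly in $t$. Fixing consistent resolutions at the self-intersections persisting through $R$, and performing the Type 1/2/3 resolutions inside $U$ as $t$ varies, yields a continuous family of simple closed curves of length less than $L$ interpolating between the curves of $C_{w'}$ and their images in $C_w$ under $\phi$.

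Given a simple closed curve $\alpha \in C_w$, concatenating this interpolation with the isotopy from $\phi^{-1}(\alpha) \in C_{w'}$ to $\gamma$ supplied by the inductive hypothesis produces an isotopy from $\alpha$ to $\gamma$ through simple closed curves of length less than $L$, completing the induction.

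The main obstacle is the length-control step inside the induction: Lemma~\ref{lem:edge_isotopy} only provides isotopy as an abstract statement, and one must verify the length bound is preserved across an edge. This rests on the freedom to shrink the surgery balls $B'$, which was already built into the setup of Section 3, together with the strict inequality $|H(t)| < L$ guaranteed by the perturbation of Section 2; the latter supplies the slack needed to absorb the small length changes introduced by the surgeries and by the Reidemeister move itself.
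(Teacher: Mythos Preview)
Your proof is correct and follows essentially the same approach as the paper: both argue along a path in $\Gamma'$ from $v^*$ to $w$, invoking Lemma~\ref{lem:edge_isotopy} at each edge and the construction of $v^*$ for the base case. You are in fact more careful than the paper's one-paragraph proof, explicitly supplying the length-control argument across each edge that the paper implicitly folds into its appeal to Lemma~\ref{lem:edge_isotopy}.
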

\begin{proof}
	Since $w$ is in $\Gamma'$, there is a path in $\Gamma$ from $v^*$ to $w$.
	Lemma~\ref*{lem:edge_isotopy} implies that $\alpha$ is isotopic to one
	of the simple closed curves in the collection corresponding to $v^*$
	through curves of length less than $L$.  Each of the simple closed
	curves in this collection is isotopic to $\gamma$ through
	curves of length less than $L$, completing the proof.
\end{proof}

We can now prove Theorem~\ref*{thm:main}.
	
\begin{proof}[Proof of Theorem~\ref*{thm:main}]
	Suppose that $v^*$ is of even degree.  Let $R$ correspond to the 
	Reidemeister move between $t_0$ and $t_1$.  The only edges
	that have $v^*$ as an endpoint come from $R$, and so $R$ must add $0$ edges to $v^*$.
	From the discussion in the proof of Lemma \ref*{lem:degree} and
	Figure~\ref*{fig:zero_edges}, we have that one of the simple closed curves
	in the collection corresponding to $v^*$ is contractible through
	simple closed curves of length less than $L$.  Each of the curves in this
	collection is isotopic to $\gamma$ through curves of length less than $L$,
	completing the proof.

	Suppose now that $v^*$ is of odd degree.  By Euler's handshaking lemma,
	$\Gamma'$ must contain another vertex $w \neq v^*$ that also has odd degree.
	By Lemma~\ref*{lem:degree}, $w \in V_0$, $w \in V_n$, or one of the simple
	closed curves in the collection that corresponds to $w$ is contractible through
	curves of length less than $L$.  We proceed on a case-by-case basis:

	\noindent{\bf{Case 1:} \bm{$w \in V_0$}} \qquad This case cannot occur.
				    Assume that $w \in V_0$.  Then,
				    by Lemma~\ref*{lem:edge_isotopy}, $w$ and $v^*$
				    must correspond to collections consisting of the same
				    number of simple closed curves.  However,
				    since $H(0)$ has $m-1$ self-intersections,
				    there is exactly one resolution of these self-intersections
				    which produces $m$ simple closed curves, and this
				    resolution corresponds to $v^*$.  Hence, $w$ must correspond
				    to a collection of strictly less than $m$ simple closed curves,
				    and so we obtain a contradiction.

	\noindent{\bf{Case 2:} \bm{$w \in V_n$}} \qquad Choose a simple closed curve $\alpha$ that
				    is in the collection
				    of simple closed curves which corresponds to $w$.
				    By Proposition~\ref*{prop:path}, $\alpha$ is isotopic to
				    $\gamma$ through curves of length less than $L$. Since $\alpha$ is composed
				    of arcs of $H(1)$ and $H(1)$ lies in a disc possessing the property
				    that any simple closed curve of length less than $L$ can be contracted
				    through simple closed curves of length less than $L$, $\alpha$
				    is contractible through simple closed curves of length less than $L$.

	\noindent{\bf{Case 3:} \bm{$w \not \in V_0$} \bf{and} \bm{$w \not \in V_n$}} \qquad
				    In this case,
				    by Lemma~\ref*{lem:degree}, there is a simple closed
				    curve $\alpha$ which is in the collection of simple closed
				    curves that corresponds to $w$, and which has the property
				    that it can be contracted through simple closed curves
				    of length less than $L$. By Proposition~\ref*{prop:path},
				    $\alpha$ is isotopic through curves of length less than $L$
				    to $\gamma$, completing the proof.
\end{proof}

%%%%%%%%%%%%%%%%%%%%%%%%%%%%%%%%%%%%%%%%%%%%%%%%%%%%%
\section{Open problems}
%%%%%%%%%%%%%%%%%%%%%%%%%%%%%%%%%%%%%%%%%%%%%%%%%%%%%

Several natural open questions arise from the problems
considered in \cite{CL}, in \cite{CL2}, and in this paper.

The first one is whether Theorem 1.2 holds without the
assumption that $\gamma$ is simple. 

\vspace{2mm}

\noindent \textbf{Conjecture 1 }	Suppose $M$ is a $2$-dimensional orientable Riemannian manifold, $\gamma$ is a closed curve on $M$,
	and $m \gamma$ is the curve formed by traversing $\gamma$ $m$ times.
	If $m \gamma$ can be contracted through curves of length less than $L$, then $\gamma$ can also be contracted through curves of length less than $L$.
	
\vspace{2mm}

One may try to approach this problem using the methods of this paper
by perturbing $m \gamma$ and cutting it at the self-intersections points 
to obtain $m-1$ copies of $\gamma$. One would then like to define
a graph of resolutions similar to the ones constructed in this paper,
in \cite{CL}, or in \cite{CL2}. Unfortunately, the most straightforward
choices lead to graphs with many vertices of odd degree. 
However, it seems to us that one may still be able to obtain
a contraction of $\gamma$ by studying some finer properties
of these graphs and, possibly, by using some additional surgeries
(cf. the surgeries on the homotopies of curves in \cite{CL2}).

Defining such a graph and studying its properties
may lead to a generalization of 
Theorem 1.1 from \cite{CL}. In \cite{CL} the authors proved
that, given a contraction of a simple closed curve through curves 
of length less than $L$, one can find a contraction through simple curves 
satisfying the same length bound. This leads to the following conjecture.

\vspace{2mm}

\noindent \textbf{Conjecture 2} 	Let $\gamma$ be a closed curve with
$k$ self-intersections, and suppose that $\gamma$ can be contracted through curves of length less than $L$.
$\gamma$ can then be contracted through curves of length less than $L$ with at most $k$ self-intersections.

\vspace{2mm}

If Conjecture 2 holds, then it implies an a priori somewhat stronger
statement that the contraction of $\gamma$ can be chosen so
that the number of self-intersections decreases monotonically 
(in other words, intersections are only destroyed in the homotopy
and never created).

Finally, it is of interest whether parametric versions of these results
hold. 

\vspace{2mm}

\noindent \textbf{Question 3} Let $\{\gamma^{s} \}$ be a 
$k-$parameter family of closed curves 
on an orientable Riemannian surface and suppose that
the family $\{2 \gamma^{s} \}$ can be continuously deformed
to a family of points through 
curves of length less than $L$. Does there exist
a similar deformation of $\{\gamma^{s} \}$ through curves
of length less than $L$?

\vspace{2mm}

This problem is closely related to a question of N. Hingston and 
H.-B. Rademacher (see \cite{HR} and \cite{BM}) about min-max levels 
of multiples of homology classes
of the loop space of a Riemannian sphere.

\begin{tabbing}
\hspace*{7.5cm}\=\kill
Gregory R. Chambers                 \> Yevgeny Liokumovich\\
Department of Mathematics           \> Department of Mathematics\\
University of Chicago               \> Imperial College London\\
Chicago, Illinois 60637             \> London SW7 2AZ\\
USA                                 \> United Kingdom\\
e-mail: chambers@math.uchicago.edu  \> e-mail: y.liokumovich@imperial.ac.uk\\
\end{tabbing}

\end{document}